\let\oldmarginpar\marginpar
\renewcommand\marginpar[1]
   \newcommand{\R}{\mathbb{R}}
   \newcommand{\C}{\mathbb{C}}
      \renewcommand{\S}{\mathbb{S}}
    \newcommand{\cL}{\mathcal{L}}
\newtheorem{proposition}{{\bf Proposition}}
\newtheorem{lemma}{{\bf Lemma}}
\newtheorem{theorem}{{\bf Theorem}}
\newtheorem{remark}{Remark}
\begin{document}

\author{David Mart\'inez Torres}
\address{Departamento de Matem\'atica Aplicada, UPM, Av. Juan de Herrera, 4, 28040 Madrid, Spain}
\email{dfmtorres@gmail.com}

\author{Marcelo Silva}
\email{marcelo.santos.math@gmail.com}

\title{Non-exactness of toric Poisson structures}

\maketitle

\begin{abstract}
 We prove that a Poisson structure on a projective toric variety which is invariant by the torus action and whose 
 symplectic leaves are the torus orbits is not exact. This is deduced from a geometric criterion for non-exactness of 
 Poisson structures with a finite number of symplectic leaves.
\end{abstract}

\section{Introduction}

The Poisson cohomology of a Poisson manifold generalizes the de Rham cohomology of a symplectic manifold. 
In general Poisson cohomology is very difficult to compute and it barely extends well-known features of the de Rham cohomology. For instance, whereas 
it is true that the Poisson 
tensor $\pi$ defines a degree two Poisson cohomology class, this being a generalization of the closedness of a symplectic form, 
it is no longer true that a (non-zero) Poisson structure on a compact manifold  be non-exact. That is, there may exist
a vector field $X$ such that 
\begin{equation}\label{eq:exact}
\cL_X\pi=-\pi.
 \end{equation}
The are some criteria for non-exactness of a Poisson structure:
\begin{itemize}
 \item A Poisson manifold whose induced Lie algebroid structure on its cotangent bundle is integrated by a compact source 1-connected 
 (symplectic) groupoid is non-exact \cite{CFM};
\item A Poisson manifold on a compact even dimensional manifold such that the top exterior power of the Poisson tensor vanishes transversely is non-exact
\cite{GMP,MO}.
\item A Poisson manifold with non-trivial modular class and a representative acting on multivector fields in a semisimple fashion is non-exact. This is a criterion
of Lu cited in \cite{W} and applied to show non-exactness of the Lie-Poisson structure on a compact connected semisimple Lie group introduced in \cite{LW}
and of the induced Bruhat Poisson structure on its manifold of full flags (and of a family of Poisson homogeneous structures
in the manifold of full flags introduced in \cite{Lu}).
\end{itemize}

Bruhat Poisson structures belong to the class of Poisson structures with a finite number of symplectic leaves. Members of this latter class of Poisson 
structures on compact manifolds may be exact.  
In this note we introduce the following elementary geometric criterion for non-exactness of Poisson structures with a finite 
number of symplectic leaves: 

\emph{If there exists a Poisson submanifold whose induced Poisson structure is non-exact then the ambient Poisson
structure is non-exact.}

The criterion is an immediate consequence of the following:
\begin{proposition}\label{pro:ex-inh} If a Poisson manifold with a finite number of leaves is exact, then any of its Poisson submanifolds 
must be exact. 
\end{proposition}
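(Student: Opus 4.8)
The plan is to integrate the infinitesimal identity $\cL_X\pi=-\pi$ and then transfer the resulting scaling symmetry to the Poisson submanifold. First I would record the behaviour of the flow. If $X$ witnesses exactness, its (local) flow $\phi_t$ satisfies $\tfrac{d}{dt}\phi_t^*\pi=\phi_t^*\cL_X\pi=-\phi_t^*\pi$, whence $\phi_t^*\pi=e^{-t}\pi$, equivalently $(\phi_t)_*\pi=e^{t}\pi$. Two consequences are immediate. Since $e^{t}\pi$ has the same rank as $\pi$, the flow preserves the rank function and hence each level set $S_d:=\{x:\operatorname{rank}\pi_x=d\}$; and since $(d\phi_t)_x\big(\operatorname{Im}\pi^\sharp_x\big)=\operatorname{Im}\pi^\sharp_{\phi_t(x)}$, the flow carries symplectic leaves diffeomorphically onto symplectic leaves of the same dimension. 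I would also note that a Poisson submanifold $N$ is \emph{saturated}: because $\pi$ is tangent to $N$, every Hamiltonian flow is tangent to $N$, so the leaf through a point of $N$ lies in $N$, and $N$ is a union of symplectic leaves.

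Granting this, the proposition reduces to a single claim: \emph{the flow $\phi_t$ preserves every symplectic leaf}. Indeed, once this is known, $\phi_t$ preserves the union $N$ of leaves; hence $X$ is tangent to $N$ and $\phi_t|_N$ is the flow of $X|_N$. Restricting $\phi_t^*\pi=e^{-t}\pi$ to $N$ gives $(\phi_t|_N)^*\pi_N=e^{-t}\pi_N$, and differentiating at $t=0$ yields $\cL_{X|_N}\pi_N=-\pi_N$, i.e. $N$ is exact.

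To prove the claim I would fix a leaf $L$ of dimension $d$, a point $p\in L$, and consider the orbit curve $t\mapsto\phi_t(p)$. Since the flow preserves rank, this curve lies entirely in the locally closed set $S_d$, and being the continuous image of an interval it is connected; hence it stays in the connected component of $S_d$ containing $p$. The decisive point — and where the hypothesis of finitely many leaves enters — is that \emph{this component is exactly $L$}. Here finiteness is essential: $S_d$ is the disjoint union of the finitely many $d$-dimensional leaves, and, because finitely many leaves assemble into a stratification, the closure of each such leaf consists of that leaf together with leaves of strictly smaller dimension, so it meets $S_d$ only in the leaf itself. Thus the finitely many $d$-dimensional leaves are disjoint relatively closed subsets of $S_d$, whence each is also relatively open, i.e. a connected component. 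Consequently $\phi_t(p)\in L$ for all $t$ in the interval of existence, so $\phi_t(L)=L$, which proves the claim and the proposition.

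The main obstacle I anticipate is precisely this last geometric input: that finitely many symplectic leaves form a stratification, so that same-dimensional leaves do not accumulate on one another and each leaf is open in its rank stratum. This is where the hypothesis does genuine work; by contrast, for $\mathfrak{su}(2)^{*}$, whose two-dimensional leaves are the infinitely many concentric spheres, a single sphere is closed but not open in $S_2=\R^3\setminus\{0\}$, and the Liouville flow does move between leaves. Everything else is the formal manipulation of the relation $\phi_t^*\pi=e^{-t}\pi$. (When $M$ is compact, as in the intended application, the flow is complete and one may instead observe that $t\mapsto\phi_t$ induces a homomorphism from the divisible group $\R$ into the finite group of homeomorphisms of the leaf space, which is therefore trivial; this shortcut, however, requires completeness, whereas the stratification argument does not.)
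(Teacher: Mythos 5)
Your overall architecture (integrate the identity to $\phi_t^*\pi=e^{-t}\pi$, show the flow preserves each symplectic leaf, then transfer exactness to $N$) is the same as the paper's, but two of your steps have genuine gaps, and they are precisely the two points where the paper is careful.

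First, your saturation claim is false: a Poisson submanifold need \emph{not} be a union of symplectic leaves. Tangency of Hamiltonian vector fields to $N$ only gives \emph{local} invariance; integral curves can exit a submanifold that is not closed. Any open subset of a symplectic manifold is a Poisson submanifold containing no whole leaf: for $M=\R^2$, $\pi=\partial_x\wedge\partial_y$, $X=x\partial_x$ (one leaf, exact), the open unit disk $N$ is a Poisson submanifold, it is not a union of leaves, and the flow of $X$ does not preserve it. So your reduction ``the flow preserves every leaf, hence preserves $N$, hence restrict $\phi_t^*\pi=e^{-t}\pi$ to $N$'' breaks down; the paper flags exactly this (``$N$ might not be union of symplectic leaves''), and its application even uses a non-closed Poisson submanifold $Y\cong\C$ inside the toric variety. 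The repair is pointwise rather than flow-theoretic: once $X$ is tangent to every leaf, then for $x\in N$ one has $X(x)\in T_xL_x=\mathrm{Im}\,\pi^\sharp_x\subseteq T_xN$, so $X$ is tangent to $N$, and $\cL_{X|_N}\pi_N=-\pi_N$ follows by differentiating local flows inside $N$.

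Second, and more seriously, the step where you say finiteness ``does genuine work'' is asserted rather than proved. The sentence ``because finitely many leaves assemble into a stratification, the closure of each such leaf consists of that leaf together with leaves of strictly smaller dimension'' is circular: the frontier condition is part of what a stratification is, and nothing in your text derives it from the finiteness of the number of leaves; it is also strictly stronger than what you need, and it is far from obvious that it holds in this smooth setting. The paper proves instead a weaker statement that suffices and has a short proof: in a Weinstein chart $U\cong S\times T$ around a point of a $d$-dimensional leaf, the union of all $d$-dimensional leaves meets $U$ in $S\times C$ with $C\subset T$ \emph{countable} (finitely many leaves, each second countable, hence each meeting $U$ in countably many plaques); a countable set is totally disconnected, so any connected curve of constant rank $d$ projects to a single point of $C$ and is therefore locally, hence globally, contained in one leaf. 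Applied to your orbit curve $t\mapsto\phi_t(p)$, which has constant rank, this yields your Claim with no frontier condition at all. Note that your parenthetical divisible-group argument ($\R\to$ finite permutation group of the leaves must be trivial) is a correct and complete proof of the Claim when the flow is complete, e.g.\ $M$ compact, and would cover the paper's applications; but for the proposition as stated, in the noncompact case, what you offer in its place is exactly the unproved stratification assertion.
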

Our application is to {\bf toric Poisson structures} on (smooth projective) toric varieties:  Poisson structures invariant by the torus action whose symplectic leaves equal the finitely many torus orbits (see for instance \cite{Ca}). 
\begin{theorem}\label{thm:non-exact} Toric Poisson structures are non-exact.
\end{theorem}
In fact our criterion also allows to recover Lu's result on the non-exactness of Bruhat Poisson structures on manifolds of full flags,
and it also applies to Bruhat Poisson structures on arbitrary flag manifolds.

\section{Proofs of the results}

On a Poisson manifold $(M,\pi)$ we consider the partition $\mathcal{R}_\pi$ of $M$ defined by the following equivalence relation on points
of $M$: two points are related if they can be joined by a smooth curve on which the Poisson tensor has constant rank.

\begin{lemma}\label{lem:exact-partition} Let $(M,\pi)$ be a Poisson manifold and let $X\in \mathfrak{X}(M)$ such that 
$\cL_X\pi=-\pi$. Then the subsets of $\mathcal{R}_\pi$ are saturated by integral curves of $X$. 
\end{lemma}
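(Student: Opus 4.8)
The plan is to integrate the defining equation $\cL_X\pi=-\pi$ along the flow of $X$ and read off that $X$ preserves the pointwise rank of $\pi$. First I would let $\phi_t$ denote the (local) flow of $X$ and recall the standard identity $\frac{d}{dt}\phi_t^*\pi=\phi_t^*(\cL_X\pi)$ for the pullback of a bivector field along a flow. Substituting the hypothesis $\cL_X\pi=-\pi$ turns this into the linear ordinary differential equation $\frac{d}{dt}\phi_t^*\pi=-\phi_t^*\pi$, whose solution with initial condition $\phi_0^*\pi=\pi$ is
\begin{equation*}
\phi_t^*\pi=e^{-t}\,\pi .
\end{equation*}

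Next I would unwind this identity pointwise. Evaluated at a point $p$ it reads $\pi_{\phi_t(p)}=e^{-t}\,(\wedge^2 d\phi_t)_p(\pi_p)$, where $(\wedge^2 d\phi_t)_p$ is the linear isomorphism of second exterior powers induced by the differential of the diffeomorphism $\phi_t$. Since the rank of a bivector is invariant both under an invertible linear change of the underlying vector space and under multiplication by the nonzero scalar $e^{-t}$, I conclude that $\mathrm{rank}\,\pi_{\phi_t(p)}=\mathrm{rank}\,\pi_p$ for every $t$ in the interval of definition. In other words, the rank of $\pi$ is constant along every integral curve of $X$.

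Finally, I would invoke the definition of $\mathcal{R}_\pi$ directly: the integral curve $t\mapsto\phi_t(p)$ is a smooth curve along which $\pi$ has constant rank, so by definition any two of its points are $\mathcal{R}_\pi$-equivalent and hence lie in the same subset of the partition. Therefore the subset of $\mathcal{R}_\pi$ through $p$ contains the entire integral curve through $p$, which is exactly the assertion that the subsets of $\mathcal{R}_\pi$ are saturated by integral curves of $X$.

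I expect the only genuine care to be bookkeeping rather than a real obstacle: one must be consistent with the pullback convention for contravariant tensors, and one should note that $X$ need not be complete, so the argument is phrased on maximal intervals of definition (this is harmless, since saturation is a statement about unions of integral curves). The conceptual crux is simply the observation that an everywhere-nonzero rescaling of $\pi$ along the flow cannot alter its pointwise rank.
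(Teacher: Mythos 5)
Your proposal is correct and follows essentially the same route as the paper: integrate $\cL_X\pi=-\pi$ along the flow to obtain $\phi_t^*\pi=e^{-t}\pi$, observe that a diffeomorphism composed with a nonzero rescaling preserves the pointwise rank of $\pi$, and conclude via the definition of $\mathcal{R}_\pi$ that integral curves stay inside its subsets. Your write-up is in fact slightly more careful than the paper's (explicit pointwise rank argument and the remark about non-completeness), but the underlying argument is identical.
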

\begin{proof} Let $x\in M$. On a small neighborhood $U$ of $x$ we have a flow map $\Psi:U\times (-\epsilon,\epsilon)\to M$ which integrates $X$.
The integration of (\ref{eq:exact}) is 
\begin{equation}\label{eq:exact-integ}\Psi_{t*}\pi=\mathrm{e}^{-t}\pi.
 \end{equation}
Because $\Psi_t$ is a diffeomorphism by (\ref{eq:exact-integ}) over the integral curve of $X$ through $x$ in $U$ the
rank of $\pi$ must be constant. Hence we deduce 
that the rank of $\pi$ is constant on integral curves. Thus integral curves of $X$ are contained in
the subsets of $\mathcal{R}_\pi$. 
\end{proof}

 A subset $R\in \mathcal{R}_\pi$ is saturated
by equidimensional symplectic leaves, but, in general, it may be far from being a submanifold of $M$. Union of subsets of $\mathcal{R}_\pi$
do not fit in general into submanifolds of $M$, either. For instance, in $\R^{3}$ with coordinates $x,y,z$ 
the bivector $\pi=f\partial_x\wedge\partial_y$ is Poisson for any 
$f\in C^{\infty}(\R^{3})$.  
The subsets of $\mathcal{R}_\pi$ are the connected components of $\R^{3}\backslash f^{-1}(0)$
and the smooth path connected components of $f^{-1}(0)$. Since any closed subset of $\R^{3}$ is the zero set of some smooth function, 
the subsets in $\mathcal{R}_\pi$ can be very complicated.

\begin{lemma}\label{lem:exact-submanifold} Let $(M,\pi)$ be a exact Poisson manifold. If $N\subset M$ is a
submanifold\footnote{The submanifold is an embedded one of an initial immersed submanifold.}
which is saturated by subsets of $\mathcal{R}_\pi$, then it is a Poisson submanifold whose induced Poisson structure is also exact.
 \end{lemma}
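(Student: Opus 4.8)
The plan is to prove Lemma~\ref{lem:exact-submanifold} in two stages: first establish that $N$ is a Poisson submanifold, and then produce the vector field witnessing exactness of the induced structure.

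\medskip

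For the first stage, recall that $N$ being saturated by subsets of $\mathcal{R}_\pi$ means that whenever a point $x\in N$ lies in some $R\in\mathcal{R}_\pi$, the entire $R$ is contained in $N$; in particular the symplectic leaf through $x$ is contained in $N$. Since $N$ is assumed to be an embedded submanifold (of an initial immersed one) and it contains the symplectic leaf through each of its points, the tangent space $T_xN$ contains the image of $\pi_x^\sharp:T_x^*M\to T_xM$ for every $x\in N$. This is exactly the pointwise condition characterizing a Poisson submanifold, so $\pi$ restricts to a well-defined bivector $\pi_N\in\mathfrak{X}^2(N)$ and $N$ inherits a Poisson structure. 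I would spell this out carefully, since the subtlety the footnote flags is precisely that $N$ need not be a leaf or a union of leaves in any naive closed sense — what we genuinely use is that the leaf directions are tangent to $N$ at each point.

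\medskip

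For the second stage, let $X\in\mathfrak{X}(M)$ be the vector field with $\cL_X\pi=-\pi$ guaranteed by exactness of $(M,\pi)$. The key point is that $X$ must be tangent to $N$. Here I would invoke Lemma~\ref{lem:exact-partition}: the integral curves of $X$ are contained in the subsets of $\mathcal{R}_\pi$, and since $N$ is saturated by these subsets, any integral curve of $X$ through a point of $N$ stays inside $N$. Consequently $X$ is tangent to $N$ along $N$, so it restricts to a vector field $X_N\in\mathfrak{X}(N)$. Because the restriction of a multivector field and the Lie derivative by a tangent vector field are compatible with restriction to a Poisson submanifold — that is, $(\cL_X\pi)|_N=\cL_{X_N}(\pi|_N)$ once both $X$ and $\pi$ are tangent to $N$ — the identity $\cL_X\pi=-\pi$ restricts to $\cL_{X_N}\pi_N=-\pi_N$, exhibiting $(N,\pi_N)$ as exact.

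\medskip

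The main obstacle I anticipate is the tangency of $X$ to $N$, and more precisely the regularity bookkeeping around it. Lemma~\ref{lem:exact-partition} only tells us integral curves lie inside subsets of $\mathcal{R}_\pi$, which can be wildly non-smooth as the $\R^3$ example stresses; so one must be careful that ``integral curves stay in $N$'' genuinely forces $X|_N$ to be tangent to the \emph{submanifold} $N$ rather than merely to some pathological saturating set. The embeddedness hypothesis on $N$ is what rescues this: an integral curve of a smooth vector field that stays inside an embedded submanifold has velocity tangent to that submanifold, so $X_x\in T_xN$ for all $x\in N$. Once tangency is secured, the compatibility of Lie derivatives with restriction is routine, and the conclusion follows immediately.
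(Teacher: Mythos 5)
Your proof is correct and follows essentially the same route as the paper: saturation by subsets of $\mathcal{R}_\pi$ gives saturation by symplectic leaves (hence the Poisson submanifold structure), Lemma~\ref{lem:exact-partition} gives tangency of $X$ to $N$, and restricting $\cL_X\pi=-\pi$ yields exactness of $\pi_N$. The extra care you take with the tangency of $X$ along the embedded (initial) submanifold is exactly the point the paper's footnote is meant to cover, so nothing is missing.
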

\begin{proof}
Because $N$ is saturated by subsets of $\mathcal{R}_\pi$  it is saturated by symplectic leaves and thus the restriction $\pi_N$ of the Poisson 
tensor to $N$ is tangent to $N$, which makes $N$ into a Poisson submanifold. 
Lemma \ref{lem:exact-partition} implies that $N$ is also saturated  
by integral curves of $X$. 
Thus $X_N$ defined to be the restriction of $X$ to $N$ is tangent to $N$. 
Therefore from (\ref{eq:exact}) we deduce
\[\cL_{X_N}\pi_N=-\pi_N.\] 
\end{proof}

\begin{proof}[Proof of Proposition \ref{pro:ex-inh}]
Because $(M,\pi)$ has a finite number of symplectic leaves a subset $R\in \mathcal{R}_\pi$ is the union of the finitely many symplectic leaves 
of a given dimension, say  $d$. We claim that $R$ is a symplectic leaf: let $\gamma$ be a curve  contained
in $R$ and let $x\in \gamma$. In the local normal form (Weinstein splitting) of $\pi$ around $x$ each leaf of dimension $d$
corresponds to a countable subset in the factor of the transverse Poisson structure. Thus the union of leaves of dimension $d$ corresponds there to a countable 
subset of points, which is totally disconnected. 
Hence the projection onto this factor 
of the connected curve 
$\gamma$ around $x$ is  the point which corresponds to 
the leaf though $x$. Hence the curve $\gamma$ is locally contained in symplectic leaves, which implies that is must be contained in one symplectic leaf.
 
If $N\subset M$ is a Poisson submanifold then it must be a union of open subsets of the leaves of $\pi$. We cannot apply straight away Lemma
\ref{lem:exact-submanifold} because $N$ might not be union of symplectic leaves (subsets of $\mathcal{R}_\pi$). However, the
proof of the Lemma says that a vector field which satisfies (\ref{eq:exact}) is tangent to any symplectic leaf of $\pi$, and, thus, to any of its 
open subsets. Therefore the vector field is tangent to $N$ which implies that the induced Poisson structure is exact. 
\end{proof}
\begin{remark}\label{rem:loc-fin} Proposition \ref{pro:ex-inh} remains valid for Poisson manifolds whose partition in symplectic leaves is locally finite.
\end{remark}
\begin{remark} Proposition \ref{pro:ex-inh} is  generalizes the fact that any symplectic submanifold of an exact
 symplectic manifold is exact. The partition $\mathcal{R}$ must appear because vector fields (more generally, multivector fields) may not 
 be tangent to some Poisson submanifolds. For instance, (\ref{eq:exact}) is satisfied for the linear Poisson structure on $\R^{3}$
 \[\pi=x\partial_y\wedge\partial_z+y\partial_z\wedge\partial_x+z\partial_x\wedge\partial_y,\]
and the opposite of the Euler vector field.
 However, this vector field it is not tangent to the individual symplectic leaves ---the spheres--- whose induced Poisson structure is non-exact 
 for strictly positive radius.
\end{remark}

Toric Poisson structures on  projective toric varieties have a finite number of symplectic leaves and --- as we will recall --- they posses
Poisson submanifolds of dimension two. A Poisson surface with a 
finite number of leaves can be exact. For instance, the Poisson structure $\pi=(x^{2}+y^{2})^{2}\partial_x\wedge \partial_y$ on $\R^{2}$ is exact; equation 
(\ref{eq:exact}) is satisfied for the opposite of the Euler vector field. 
Moreover, it extends to an exact Poisson structure on the sphere
with two symplectic leaves (one uses as change of coordinates the radial inversion).

The next result --- which we include for the sake of completeness --- is well-known to experts.
\begin{lemma}\label{lem:folk} Let $\pi=f\partial_x\wedge\partial_y$ be a rotationally invariant Poisson structure in a disk $D\subset \R^{2}$
of some radius $r_0\in (0,\infty]$,
where $f\in C^{\infty}(D)$ vanishes at the origin and with order 2.
 Then $\pi$ is non-exact.
\end{lemma}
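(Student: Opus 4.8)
The plan is to reduce exactness to a single scalar equation and then obstruct it at the second-order jet of the origin. First I would record the two-dimensional identity $\cL_X(\partial_x\wedge\partial_y)=-(\mathrm{div}\,X)\,\partial_x\wedge\partial_y$, which gives $\cL_X(f\,\partial_x\wedge\partial_y)=\big(X(f)-f\,\mathrm{div}\,X\big)\,\partial_x\wedge\partial_y$. Hence, writing $X=a\,\partial_x+b\,\partial_y$, equation \eqref{eq:exact} is equivalent to the scalar PDE
\begin{equation}\label{eq:scalar}
 X(f)-f\,\mathrm{div}\,X=-f \qquad\text{on } D .
\end{equation}
Assuming for contradiction that a smooth $X$ solves \eqref{eq:scalar}, the whole argument becomes local at the origin and uses only the $2$-jet of $f$ there; in particular the radius $r_0$ and the behaviour of $f$ away from $0$ are irrelevant.

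Next I would extract the jet of $f$. Rotational invariance means $f\circ R_\theta=f$ for every rotation $R_\theta$, so the Hessian $H$ of $f$ at the origin satisfies $R_\theta^{\mathsf T}H R_\theta=H$ for all $\theta$, forcing $H=2c\,\mathrm{Id}$; vanishing to order exactly $2$ gives $c\neq 0$. Taylor's theorem then yields $f=c(x^2+y^2)+o(r^2)$, with no linear part. I would then match homogeneous components in \eqref{eq:scalar}. The degree-one part of the right-hand side is zero, while that of the left-hand side is $2c\,\langle X(0),(x,y)\rangle$ — the term $f\,\mathrm{div}\,X$ contributes only in degree $\geq 2$; this forces $X(0)=0$, so $X=A\,(x,y)^{\mathsf T}+o(r)$ with $A=DX(0)$.

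The decisive step is the degree-two match. Writing $A=\begin{pmatrix} a_1 & a_2\\ b_1 & b_2\end{pmatrix}$, the quadratic part of the left-hand side of \eqref{eq:scalar} is $2c\,\langle A(x,y)^{\mathsf T},(x,y)^{\mathsf T}\rangle-c\,(\mathrm{tr}\,A)\,(x^2+y^2)$, whose $x^2$- and $y^2$-coefficients are $c(a_1-b_2)$ and $c(b_2-a_1)$. Equating these to those of $-c(x^2+y^2)$ yields simultaneously $a_1-b_2=-1$ and $b_2-a_1=-1$, an absurdity. Therefore no $X$ solves \eqref{eq:scalar}, and $\pi$ is non-exact.

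I expect the only real obstacle to be bookkeeping: organizing the Taylor expansion so that the obstruction is isolated at the correct homogeneous degree, and checking that $\mathrm{div}\,X$ enters at order $\geq 1$ so it cannot spoil the degree-one conclusion $X(0)=0$. Conceptually the contradiction reflects that the round quadratic coefficient $c\,r^2$ forces the linearized flow $e^{tA}$ to be conformal, and a conformal linear map $\rho R_\theta$ preserves the leading bivector $c\,r^2\,\partial_x\wedge\partial_y$ (the factors $\rho^{-2}$ from the coefficient and $\rho^{2}=\det$ from the pushforward cancel), which is incompatible with the rescaling $\Psi_{t*}\pi=\mathrm{e}^{-t}\pi$ of \eqref{eq:exact-integ} for $t\neq 0$.
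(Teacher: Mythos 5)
Your proof is correct, and it takes a genuinely different route from the paper's. The paper exploits the rotational symmetry twice: first to write $f=g(r^{2})=r^{2}h(r^{2})$ with $h(0)\neq 0$ (smoothness of $g$ and the factorization rest on Whitney's theorem for even functions and Hadamard's lemma), and second to replace a hypothetical solution $X$ of $\cL_X\pi=-\pi$ by its average over the circle action, so that without loss of generality $X=a(r^{2})r\partial_r+b(r^{2})\partial_\theta$; a short polar computation then yields $\cL_X\pi=2r^{2}(h'a-ha')\,r\partial_r\wedge\partial_\theta$, and evaluating the resulting identity $2r^{2}(h'a-ha')=-h$ at $r=0$ forces $h(0)=0$, a contradiction. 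You never modify $X$: you reduce exactness to the scalar equation $X(f)-f\,\mathrm{div}X=-f$ (your bivector identity $\cL_X(\partial_x\wedge\partial_y)=-(\mathrm{div}X)\,\partial_x\wedge\partial_y$ is correct) and obstruct it by matching Taylor coefficients at the origin, using invariance only to pin down the $2$-jet of $f$ as $c(x^{2}+y^{2})$, $c\neq 0$. Your degree-one match correctly forces $X(0)=0$, and the degree-two match correctly produces the incompatible pair $a_1-b_2=-1$, $b_2-a_1=-1$. What your argument buys is economy of input: it avoids the averaging step and the smooth-functions-of-$r^{2}$ facts entirely, and it isolates the obstruction as a $2$-jet phenomenon, so it actually proves more --- non-exactness for any smooth $f$, invariant or not, whose $2$-jet at the origin is $c(x^{2}+y^{2})$ with $c\neq 0$ (and the same coefficient count, in trace form, obstructs any nondegenerate Hessian). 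What the paper's argument buys is brevity once the polar normal form $\pi=hr\partial_r\wedge\partial_\theta$ is in place, plus the reusable averaging technique; as proofs of the stated lemma, both are complete.
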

\begin{proof} Let $r\partial_r,\partial_\theta$ denote the Euler and rotational vector fields.
The invariance of $\pi$ implies that $f(x,y)=g(r^{2})$, $x^{2}+y^{2}=r^{2}$, 
$g\in C^{\infty}([0,r_0))$.
The vanishing at the origin at order 2 implies that we can factor 
\[g(r^{2})=r^{2}h(r^{2}),\quad h\in C^{\infty}([0,r_0)),\quad h(0)\neq 0,\]
and, therefore, $\pi=hr\partial_r\wedge \partial_\theta$.
Let us assume that there exists $X\in \mathfrak{X}(D)$ such that $\cL_X\pi=-\pi$. Because $\pi$ is rotationally invariant we may assume without loss
of generality that $X$ is rotationally invariant. Equivalently,
\[X=a(r^{2})r\partial_r+b(r^{2})\partial_\theta,\quad a,b\in C^{\infty}([0,r_0)).\]
From the Leibniz rule for the Lie derivative and the invariance of $X$ we deduce 
\[\begin{split} \cL_{X}(hr\partial_r\wedge \partial_\theta) & =(\cL_X hr\partial_r)\wedge \partial_\theta+hr\partial_r\wedge \cL_X \partial_\theta=
\cL_{ar\partial_r} (hr\partial_r)\wedge \partial_\theta=\\
&= (a\cL_{r\partial_r}h-h\cL_{r\partial_r}a)r\partial_r\wedge \partial_\theta=2r^{2}(h'a-ha')r\partial_r\wedge \partial_\theta.  
\end{split}
\]
Hence exactness of $\pi$ forces  $h(0)=0$, which contradicts the assumption on $f$.
\end{proof}

\begin{proof}[Proof of Theorem \ref{thm:non-exact}]
Let $(X,\mathbb{T})$ be a projective\footnote{It suffices to assume that the smooth toric variety has a fixed point.} toric variety.
Let $x\in X$ be a fixed point for the action of $\mathbb{T}$ and let us consider the corresponding toric chart centered at $x$. 
This amounts to certain group isomorphism of $\mathbb{T}$ with $(\C^{*})^{n}$ together with an equivariant biholomorphism
whose image is $\C^{n}$ with the standard $(\C^{*})^{n}$-action and which sends $x$ to the origin. Therefore any 
complex axis $\C\subset \C^{n}$ corresponds to a toric subvariety $(Y,\mathbb{T}_Y)$ of $(X,\mathbb{T})$ biholomorphic to 
$(\C,\C^{*})$. 

The symplectic leaves of the toric Poisson structure $\pi$ on $X$ are the $\mathbb{T}$-orbits. Therefore the submanifold
$Y$ is in the hypotheses of Proposition
\ref{pro:ex-inh} and thus it has an induced Poisson structure $\pi_Y$. We use the toric chart to induce a Poisson 
structure $\pi_\C$ on $\C$ which is $\C^*$-invariant. This 
implies that there exists  a constant Poisson structure on the Lie algebra of $\C^{*}$ and $\pi_\C$ is obtained by replacing the vectors 
in the formula of the constant bivector by the fundamental vector fields of the $\C^{*}$-action. Equivalently,
\[\pi_\C=kr\partial_r\wedge \partial_\theta,\,k\in \R\backslash 0.\]
Hence it is a rotationally invariant Poisson structure which vanishes  at the origin and with order 2. By Lemma \ref{lem:folk}
$\pi_\C$ is not exact. Thus by Proposition \ref{pro:ex-inh} $(X,\pi)$ cannot be exact. 
\end{proof}
\begin{remark} One can show that each individual symplectic leaf of a toric Poisson structure is exact.
 \end{remark}

\begin{remark}[Bruhat Poisson structures]
Let $K$ be a compact semisimple Lie group, let $G$ be its complexification, $T\subset K$ a maximal torus and $B\subset G$ a the positive
Borel subgroup determined by the fixed root ordering. The inclusion of $K$ into $G$ induces a diffeomorphism between the two models of the manifold of full flags
\[ K/T\to G/B.\]
If $P$ is a parabolic subgroup then we have the projection between flag manifolds
\[G/B\to G/P.\]
Associated to the root ordering and a choice of root vectors there is a Poisson-Lie group structure $\pi$ on $K$ which is projectable for the submersion
\[q:K\to G/P\]
to the so-called Bruhat Poisson structure $\pi_{G/P}$ whose symplectic leaves are the Bruhat cells \cite{LW}. Lu has proved that the
Poisson structures $\pi$ and $\pi_{K/T}$ are not exact (and similarly for a family of Poisson homogeneous structures on $K/T$ introduced in \cite{Lu}).

We can recover Lu's result using our methods: let $\alpha$ be a simple root not in the subset of simple roots which defines $P$. There is a commutative diagram 
\[\xymatrix{ 
(\mathrm{SU}(2),\pi_\alpha)\ar[r]^{\phi_\alpha}\ar[d] &  (K,\pi)\ar[d]^{q}\\
 (\mathrm{SU}(2)/\S^{1},[\pi_{\alpha}])\ar[r]^{[\phi_\alpha]}) & (\mathbb{CP}^{1}_\alpha,\pi_{\mathbb{CP}^{1}_\alpha})\subset (G/P,\pi_{G/P})}.
\]
where $\phi_\alpha$ is a monomorphism between Poisson Lie groups described in \cite{Lu2}.  The vertical arrows are Poisson morphisms \cite{LW}.  From properties of the Bruhat decomposition it follows 
that the restriction of $q$ to the image of $\phi_\alpha$ is a submersion over the image of $[\phi_\alpha]$,
 which is the closure of the Bruhat cell in $G/P$ defined by $\alpha$, and which is diffeomorphic to a complex projective line.
Therefore this complex projective line is a Poisson submanifold of $(G/P,\pi_{G/P})$  Poisson
diffeomorphic to $(\mathrm{SU}(2)/\S^{1},[\pi_{\alpha}])$. The Lie-Poisson structure $\pi_\alpha$ is a multiple of the 
Lie-Poisson structure for $\mathrm{SU}(2)$. It is invariant under the left and right actions of $\S^{1}$. Hence 
$[\pi_{\alpha}]$ is a rotationally invariant Poisson structure vanishing at one point with order two (see for instance the Appendix in \cite{She}).
Therefore by  Proposition \ref{pro:ex-inh} $(G/P,\pi_{G/P})$ cannot be exact.

\end{remark}

\begin{remark} There are Poisson structures ---other that Toric Poisson and Bruhat Poisson structures---  for which the partition $\mathcal{R}_\pi$ 
is well-behaved and gives rise to natural Poisson submanifolds. One such instance of Poisson submanifold is  the so-called singular locus of
 a Poisson bivector whose top exterior power vanishes transversely, and, more generally,
 the singular locus of the Poisson structures 
described in \cite{GMW} and \cite{MS}. On a compact manifold the non-exactness of a Poisson tensor 
whose top exterior power vanishes transversely is deduced from deep results on its Poisson cohomology \cite{MO,GMW}. One
could approach the problem in the spirit of this note and prove first the non-exactness of its singular locus. This strategy would also 
be valid to analyze the non-exactness of the more general Poisson structures described in \cite{GMW} and \cite{MS}.
\end{remark}


\begin{thebibliography}{xxxxx}

\bibitem{Ca} Caine, Arlo. Toric Poisson structures. Mosc. Math. J. 11 (2011), no. 2, 205--229, 406.
\bibitem{CFM}  Crainic, Marius; Fernandes, Rui Loja; Mart\'inez Torres, David. Poisson manifolds of compact types (PMCT 1).
J. Reine Angew. Math. 756 (2019), 101--149.
\bibitem{GMP} Guillemin, Victor; Miranda, Eva; Pires, Ana Rita. Symplectic and Poisson geometry on $b$-manifolds. Adv. Math. 264 (2014), 864--896.
\bibitem{GMW} Guillemin, Victor; Miranda, Eva; Weitsman, Jonathan. Desingularizing $b^m$-symplectic structures. Int. Math. Res. Not. IMRN 2019, no. 10, 2981--2998.
\bibitem{Lu} Lu, Jiang-Hua. Classical dynamical r-matrices and homogeneous Poisson structures on G/H and K/T. Comm. Math. Phys. 212 (2000), no. 2, 337--370.
\bibitem{Lu2} Lu, Jiang-Hua. Coordinates on Schubert cells, Kostant's harmonic forms, and the Bruhat Poisson structure on G/B. Transform. Groups 4 (1999), no. 4, 355--374.
\bibitem{LW} Lu, Jiang-Hua; Weinstein, Alan. Poisson Lie groups, dressing transformations, and Bruhat decompositions. 
J. Differential Geom. 31 (1990), no. 2, 501--526.
\bibitem{MO} Marcut, Ioan; Osorno Torres, Boris. Deformations of log-symplectic structures. J. Lond. Math. Soc. (2) 90 (2014), no. 1, 197--212.
\bibitem{MS} Miranda, Eva; Scott, Geoffrey. The geometry of $E$-manifolds. Rev. Mat. Iberoam. 37 (2021), no. 3, 1207--1224.
\bibitem{W} Weinstein, Alan Poisson geometry. Symplectic geometry. Differential Geom. Appl. 9 (1998), no. 1-2, 213--238.
\bibitem{She} Sheu, Albert Jeu-Liang. Quantization of the Poisson SU(2) and its Poisson homogeneous space—the 2-sphere. 
With an appendix by Jiang-Hua Lu and Alan Weinstein. Comm. Math. Phys. 135 (1991), no. 2, 217--232.
\end{thebibliography}
\end{document}